\documentclass[11pt,a4paper]{article}

\usepackage{amssymb,amsmath,amsfonts,amsthm,mathrsfs,mathtools}
\usepackage{bbm,bm}
\usepackage[marginal]{footmisc}
\usepackage{CJK}
\usepackage{mathrsfs}
\usepackage{graphicx}
\usepackage{float}
\usepackage{xcolor}
\usepackage{url}
\usepackage{relsize}
\usepackage{appendix}
\usepackage{epstopdf}
\usepackage{enumitem}

\usepackage[colorlinks,linkcolor=blue,anchorcolor=blue,citecolor=purple,urlcolor=green]{hyperref}

\title{\bf  Asymptotic Formulas for Negative Sobolev Norms and Applications
}
\author{Huaiqian Li
  \vspace{2mm}
\\
{\footnotesize Center for Applied Mathematics and KL-AAGDM, Tianjin University, Tianjin 300072, China}\\
\footnotesize{\texttt{huaiqian.li@tju.edu.cn}}
}

\date{}

\def\R{\mathbb{R}}
\def\E{\mathbb{E}}

\def\d{\textup{d}}

\def\L{\mathrm{L}}
\def\W{\mathrm{W}}

\def\<{\langle}
\def\>{\rangle}
\def\Proof.{\noindent{\bf Proof. }}

\newtheorem{theorem}{Theorem}[section]
\newtheorem{lemma}[theorem]{Lemma}
\newtheorem{corollary}[theorem]{Corollary}

\newtheorem{example}[theorem]{Example}

\theoremstyle{definition}\newtheorem{remark}[theorem]{Remark}

\begin{document}
\allowdisplaybreaks
\maketitle
\makeatletter 
\renewcommand\theequation{\thesection.\arabic{equation}}
\@addtoreset{equation}{section}
\makeatother 

\begin{abstract}
 This paper establishes an asymptotic formula for negative Sobolev norms as the fractional order tends to zero. In the Euclidean setting, under a mild boundedness condition, the product of the order and the norm raised to the power $p$ converges to a dimension-independent constant multiple of the corresponding $L^p$ norm. The proof relies on heat-kernel regularization, weak compactness, and an Abelian--Tauberian argument. The result is further extended to a general measure space equipped with a family of bounded and continuous operators that covering nonlinear and non-semigroup settings. We also present several applications in analysis and probability, including an absolute-continuity criterion for measures,  a random-distribution regularity result, a construction of square-integrable local times for fractional Brownian motion, and limiting formulas for truncated maximal operators and martingales.
\end{abstract}

\section{Introduction}\label{sec-intro}\hskip\parindent
Let $\mathbb{N}$ be the set of positive integers, and fix $n\in\mathbb{N}$. Consider the $n$-dimensional Euclidean space $\R^n$ endowed with the standard inner product $\langle\cdot,\cdot\rangle$ and the induced norm $\|\cdot\|=\langle\cdot,\cdot\rangle^{1/2}$, where $n\in \mathbb{N}$. For $p\in[1,\infty]$, let $\L^p(\R^n)$ be the standard Lebesgue space over $\R^n$, equipped with the norm $\|\cdot\|_{\L^p}$.

Let $\alpha\in(0,1)$ and $p\in[1,\infty)$. Define the standard fractional Sobolev space as
$$\W^{\alpha,p}(\R^n):=\{f\in \L^p(\R^n):\ [f]_{\W^{\alpha,p}}<\infty\},$$
where $[\cdot]_{\W^{\alpha,p}}$ denotes the Gagliardo--Slobodeckij seminorm given by
$$[f]_{\W^{\alpha,p}}=\bigg(\int_{\R^n}\int_{\R^n}\frac{|f(x)-f(y)|^p}{\|x-y\|^{n+\alpha p}}\,\d x\d y\bigg)^{\frac{1}{p}}.$$
For a comprehensive study of fractional Sobolev spaces, we refer the reader to \cite{Leoni2023,DNPV2012}, among others.  It is well known that for every $p\in[1,\infty)$, the family of Gagliardo--Slobodeckij seminorms  $\{[\cdot]_{\W^{\alpha,p}}:\ \alpha\in (0,1)\}$ lacks continuity at the endpoints $\alpha=0$ and $\alpha=1$. Here, following the usual convention, we adopt the notation $[f]_{\W^{0,p}}=\|f\|_{\L^p}$ and $[f]_{\W^{1,p}}=\|\nabla f\|_{\L^p}$, where the gradient $\nabla f$ is understood in the sense of distributions.

In their seminal paper \cite{BBM1}, Bourgain, Brezis, and Mironescu established a fundamental connection between the fractional and classical Sobolev norms over bounded smooth domains. Specifically, a version of their results, formulated on $\R^n$ (see, e.g., \cite{Mohanta24,BSY23,Brezis2002} for proofs), states that for every $p\in[1,\infty)$ and every $f\in \W^{1,p}(\R^n)$, one has
\begin{eqnarray}\label{BBM}
\lim_{\alpha\rightarrow1^-}(1-\alpha)^{\frac{1}{p}}[f]_{\W^{\alpha,p}}=\mathfrak{a}_{p,n}\|\nabla f\|_{\L^p},
\end{eqnarray}
where the constant is given by $\mathfrak{a}_{p,n}=\big(\frac{2}{p}\pi^{\frac{n-1}{2}} \Gamma(\frac{p+1}{2})/\Gamma(\frac{p+n}{2})\big)^{1/p}$, and $\Gamma$ denotes the Gamma function. Subsequently, in a remarkable work \cite{MS2002}, Maz'ya and Shaposhnikova characterized the limiting behavior as $\alpha \to 0^+$. They proved that for every $f\in \cup_{0<\alpha<1}\W^{\alpha,p}(\R^n)$,
\begin{eqnarray}\label{MS}
\lim_{\alpha\rightarrow0^+}\alpha^{\frac{1}{p}}[f]_{\W^{\alpha,p}}=\mathfrak{b}_{p,n}\|f\|_{\L^p},
\end{eqnarray}
where $\mathfrak{b}_{p,n}=\big(\frac{4\pi^{n/2}}{p\Gamma(n/2)}\big)^{1/p}$. In the literature, the limiting formulas in equations \eqref{BBM} and \eqref{MS} are commonly referred to as the BBM formula and the MS formula, respectively.

These groundbreaking results have inspired various extensions across a wide array of settings, generating an extensive body of literature that we are unable to survey exhaustively here. To name just a few contributions, we mention \cite{Oleinik2025,HLYY25,GT2024,DGPYYZ24,Han24,Mohanta24,DLTYY2024,BSY23,ACPS20b,Ludwig2014,Milman05,Davila02,BBM2} for developments concerning the BBM formula, and \cite{NTYYZ25+,LiWu2025+,HPXZ25,DLTYY2024,GT2024,BGT2022,ACPS20a,Ludwig2014,Milman05} for advances related to the MS formula.

Turning now to the question of interest in the present note, we focus on the negative-order regime. For $\alpha>0$ and $p\in(1,\infty)$, the negative Sobolev space $\W^{-\alpha,p}(\R^n)$ is defined as the dual space of $\W^{\alpha,q}(\R^n)$, where $q$ is the H\"older conjugate exponent of $p$ (see, e.g., \cite[Section 6.5]{Leoni2023}). These spaces of negative differentiability have been the subject of intensive investigation, and play a crucial role in function analysis and theory of (stochanstic) partial differential equations; see, e.g., \cite{FL2023,Mitrea2018,Brezis2011,Temam2001}. Given the significance of the limiting formulas for positive-order spaces, it is natural to ask whether an analogous continuity property holds for the dual norms. Specifically, for $p\in(1,\infty)$, we pose the following question:
\begin{itemize}[leftmargin=2.2cm]
\item[(\textbf{Q})]  Does the family of norms $\{\|\cdot\|_{\W^{-\alpha,p}}\}_{\alpha\in(0,1)}$ depend continuously on $\alpha$ at the endpoint $\alpha=0$?
\end{itemize}

The remainder of the paper is structured as follows. Section \ref{sec-nms} is devoted to question (\textbf{Q}), with the main result stated and proved in Theorem \ref{main-MS}. Building on this foundation, Section \ref{sec-extension} extends the investigation to a broader framework and establishes a corresponding limiting formula in Theorem \ref{main-gen-MS}. Finally, Section \ref{sec-applications} presents several applications of the two limiting formulas.

\section{Limiting formulas for negative Sobolev norms}\label{sec-nms}\hskip\parindent
In order to address the question (\textbf{Q}), we recall an equivalent definition of the negative Sobolev norm based on the heat kernel (also known as the Gaussian--Weierstrass function); see  \cite[Proposition D.1]{AKM2019}. For $t>0$, let $\Phi_t$ denote the heat kernel given by
\begin{align}\label{GWF}
\Phi_t(x)=\frac{1}{(\sqrt{4\pi t})^{n}}\exp\bigg(-\frac{\|x\|^2}{4 t}\bigg),\quad x\in\R^n.
\end{align}
For $p\in(1,\infty)$ and $\alpha>0$, the negative Sobolev norm on $\W^{-\alpha,p}(\R^n)$ is defined for $f\in \W^{-\alpha,p}(\R^n)$ by
\begin{align}\label{neg-sob-norm}
\|f\|_{\W^{-\alpha,p}}=
\begin{dcases}
\left(
  \int_0^1 \left(t^{\frac{\alpha}{2}}\|f\ast\Phi_t\|_{\L^p}\right)^p \frac{\mathrm{d}t}{t}
\right)^{\frac{1}{p}},& \alpha\notin\mathbb{N},\\[1.2ex]
\left\|\left(\int_0^1 \left(t^{\frac{\alpha}{2}} |f\ast \Phi_t|\right)^2 \frac{\mathrm{d}t}{t}
  \right)^{\frac{1}{2}}\right\|_{{\rm L}^p},& \alpha\in\mathbb{N},
\end{dcases}
\end{align}
where $\ast$ indicates the convolution operation. Here and below, we use the convention $\|f\|_{\W^{-\alpha,p}}=+\infty$ if $f\notin\W^{-\alpha,p}(\R^n)$. For other equivalent definitions of the negative Sobolev norm, see \cite{Grafakos2014b,Triebel92} for instance. Since we shall be concerned exclusively with the limiting case $\alpha\to0^+$, only the first branch of \eqref{neg-sob-norm} (where $\alpha\notin\mathbb{N}$) will be needed in what follows. Throughout the paper, when a distribution $f$ does not belong to $\W^{-\alpha,p}(\R^n)$, we set $\|f\|_{\W^{-\alpha,p}}:=+\infty.$

The main result of this section,  presented in the next theorem, establishes a limiting formula for the negative Sobolev norms $\|\cdot\|_{\W^{-\alpha,p}}$ as $\alpha\rightarrow0^+$, which is notably dimension-free.
\begin{theorem}\label{main-MS}
Let $p\in(1,\infty)$. Suppose that $f\in \cup_{\alpha\in(0,1)}\W^{-\alpha,p}(\R^n)$ and
\begin{eqnarray*}
\liminf_{\alpha\rightarrow0^+}\alpha\|f\|_{\W^{-\alpha,p}}^p<\infty.
\end{eqnarray*}
Then, $f\in\L^p(\R^n)$, the limit $\lim_{\alpha\rightarrow0^+}\alpha\|f\|_{\W^{-\alpha,p}}^p$ exists, and
\begin{eqnarray}\label{main-MS-1}
\lim_{\alpha\rightarrow0^+}\alpha\|f\|_{\W^{-\alpha,p}}^p=\frac{2}{p} \|f\|^p_{\L^p}.
\end{eqnarray}
\end{theorem}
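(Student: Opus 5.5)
Set $g(t):=\|f\ast\Phi_t\|_{\L^p}^p$ for $t\in(0,1]$, so that
$$\alpha\|f\|_{\W^{-\alpha,p}}^p=\alpha\int_0^1 t^{\frac{\alpha p}{2}-1}g(t)\,\d t .$$
The argument has two halves. First we show that the finiteness of the $\liminf$ forces $f\in\L^p(\R^n)$. Then, for $f\in\L^p(\R^n)$, we evaluate the limit by an Abelian argument.

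\textbf{Abelian step (assuming $f\in\L^p(\R^n)$).} Since $\Phi_t$ is an approximate identity, $g(t)\to\|f\|_{\L^p}^p$ as $t\to0^+$. Moreover $g(t)\le\|f\|_{\L^p}^p$ by Young's inequality, because $\|\Phi_t\|_{\L^1}=1$. The measure $\alpha t^{\frac{\alpha p}{2}-1}\,\d t$ on $(0,1]$ has total mass $\frac{2}{p}$. Its mass on $[\delta,1]$ equals $\frac{2}{p}(1-\delta^{\alpha p/2})$, which tends to $0$ as $\alpha\to0^+$ for each fixed $\delta>0$. So the measure concentrates at $t=0$. Splitting the integral at $\delta$ and using the uniform bound on $g$ gives
$$\lim_{\alpha\to0^+}\alpha\|f\|_{\W^{-\alpha,p}}^p=\frac{2}{p}\|f\|_{\L^p}^p .$$
This yields both the existence of the limit and the constant in \eqref{main-MS-1}.

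\textbf{Tauberian/compactness step.} Choose $\alpha_k\to0^+$ with
$$\alpha_k\int_0^1 t^{\frac{\alpha_kp}{2}-1}g(t)\,\d t\le M<\infty .$$
\begin{enumerate}
\item The key structural fact is that $t\mapsto g(t)$ is nonincreasing. This holds because $f\ast\Phi_t=(f\ast\Phi_s)\ast\Phi_{t-s}$ for $s<t$ (semigroup property), and Young's inequality applies. Here $f$ is a tempered distribution, $f\ast\Phi_s$ is smooth, and $g(s)<\infty$ for a.e.\ $s$ because $f\in\W^{-\alpha_0,p}$ for some $\alpha_0$; monotonicity then upgrades this to all $s$.
\item Monotonicity gives, for every $\tau\in(0,1)$,
$$M\ge\alpha_k\int_0^\tau t^{\frac{\alpha_kp}{2}-1}g(t)\,\d t\ge g(\tau)\,\frac{2}{p}\,\tau^{\alpha_kp/2}.$$
Letting $k\to\infty$ yields $g(\tau)\le\frac{p}{2}M$ for all $\tau$.
\item Hence $\{f\ast\Phi_t\}_{t\in(0,1)}$ is bounded in $\L^p(\R^n)$. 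Since $p\in(1,\infty)$, $\L^p$ is reflexive, so a subsequence $f\ast\Phi_{t_j}$ converges weakly to some $h\in\L^p$.
\item On the other hand, $f\ast\Phi_t\to f$ in $\mathcal S'(\R^n)$ as $t\to0$. Testing against Schwartz functions identifies $f=h\in\L^p(\R^n)$, with $\|f\|_{\L^p}^p\le\frac{p}{2}M$.
\end{enumerate}
Combined with the Abelian step, this proves the theorem.

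\textbf{Main obstacle.} The delicate point is the uniform bound on $g(\tau)$ obtained from the $\liminf$ hypothesis. It relies entirely on the monotonicity of $g$, which in turn needs the semigroup identity to be justified for a distribution $f$. I would handle this by noting that for $s>0$, $f\ast\Phi_s$ is a smooth function of polynomial growth lying in $\L^p$ for a.e.\ $s$. The identity $(f\ast\Phi_s)\ast\Phi_{t-s}=f\ast\Phi_t$ then holds pointwise, since the Fourier transforms multiply: $e^{-s|\xi|^2}e^{-(t-s)|\xi|^2}=e^{-t|\xi|^2}$. With monotonicity in hand, the remaining steps are routine.
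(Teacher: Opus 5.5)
Your proof is correct and follows essentially the same route as the paper. It uses the same ingredients: monotonicity of $t\mapsto\|f\ast\Phi_t\|_{\L^p}^p$ from the semigroup property and Young's inequality, a uniform bound extracted from the $\liminf$ hypothesis, weak compactness in $\L^p$ combined with $f\ast\Phi_t\to f$ in $\mathcal{S}'(\R^n)$ to get $f\in\L^p(\R^n)$, and a dominated-convergence (Abelian) argument for the limit. The differences are cosmetic. You obtain the bound directly from $g(\tau)\,\frac{2}{p}\,\tau^{\alpha_k p/2}\le M$, whereas the paper argues by contradiction after the substitution $t=e^{-s}$. You also evaluate the limit after establishing $f\in\L^p(\R^n)$, whereas the paper first computes it as $\frac{2}{p}\lim_{t\to0^+}g(t)$ and identifies that value afterwards.
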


In order to prove Theorem \ref{main-MS}, we need to recall some notions. Let $\mathcal{S}(\R^n)$ be the Schwartz space of rapidly decreasing smooth functions on $\R^n$, and let $\mathcal{S}'(\R^n)$ be its dual space, i.e., the space of tempered distributions. For a sequence $(\chi_k)_{k\in\mathbb{N}}\subseteq \mathcal{S}(\R^n)$ and $\chi\in\mathcal{S}(\R^n)$, we say that $(\chi_k)_{k\in\mathbb{N}}\subseteq \mathcal{S}(\R^n)$ converges to $\chi$ in $\mathcal{S}(\R^n)$ if for all multi-indices $\bm{\alpha},\bm{\beta}\in (\mathbb{N}\cup\{0\})^n$ we have
$$\sup_{x\in\R^n}\left|x^{\bm{\alpha}}[\partial^{\bm{\beta}}(\chi_k- \chi)](x)\right|\to 0,\quad \mbox{as }k\to\infty,$$
where for $x=(x_1,\cdots,x_n)\in\R^n$ and $\bm{\alpha}=(\alpha_1,\cdots,\alpha_n)\in (\mathbb{N}\cup\{0\})^n$, we understand $x^{\bm{\alpha}}=x_1^{\alpha_1}\cdots x_n^{\alpha_n}$ and $\partial^{\bm{\alpha}} \chi=\partial_1^{\alpha_1}\cdots \partial_n^{\alpha_n} \chi$. For a tempered distribution $f\in\mathcal{S}'(\R^n)$ and a test function $\chi\in\mathcal{S}(\R^n)$, the action of $f$ on $\chi$ is denoted by $(f,\chi)$, and the convolution $f\ast \chi$ is defined by
\begin{align*}
(f\ast\chi, \varphi)=(f, \tilde{\chi}\ast \varphi),\quad \varphi\in\mathcal{S}(\R^n),
\end{align*}
where $\tilde{\chi}(x):=\chi(-x)$ for any $x\in\R^n$. A sequence $(f_k)_{k\in\mathbb{N}}\subseteq\mathcal{S}'(\R^n)$ is said to converge to $f\in \mathcal{S}'(\R^n)$ in the sense of distributions (i.e., in the weak$^\ast$-topology of $\mathcal{S}'(\R^n)$) if
\begin{align*}
\lim_{k\to\infty}(f_k,\chi)=(f,\chi),\quad \chi\in\mathcal{S}(\R^n).
\end{align*}
For more details on the tempered distribution, refer to \cite{Mitrea2018,Grafakos2014a} for instance.

We need the following results, which are more or less  standard.
\begin{lemma}\label{diff-lem}
Let $p\in[1,\infty)$. Then the following assertions hold.
\begin{itemize}
\item[(1)] For every $f\in \L^p(\mathbb R^n)$,
$$\|f-f\ast\Phi_t\|_{\L^p}\rightarrow0\quad\mbox{as }t\to0^+.$$

\item[(2)] For every \(f\in\mathcal S'(\mathbb R^n)\),
$$f\ast\Phi_t\longrightarrow f\quad\mbox{in }\mathcal S'(\mathbb R^n)$$
as $t\to0^+$. If, in addition, there exists $\alpha>0$ such that
\begin{align}\label{diff-lem-assumption}
\int_0^1t^{\frac{\alpha p}{2}-1}\|f\ast\Phi_t\|_{\L^p}^p\,\d t<\infty,
\end{align}
 then $f\ast\Phi_t\in \L^p(\mathbb R^n)$ for every $t>0$, and
$$(0,\infty)\ni t\longmapsto f\ast\Phi_t\in \L^p(\mathbb R^n)$$
is continuous.
\end{itemize}
\end{lemma}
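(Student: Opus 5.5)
For part (1) I will use the standard approximate-identity argument. Three properties of $\Phi_t$ drive it: $\Phi_t\ge0$, $\int_{\R^n}\Phi_t=1$, and $\Phi_t(x)=t^{-n/2}\Phi_1(x/\sqrt t)$. Minkowski's integral inequality then gives
\begin{align*}
\|f-f\ast\Phi_t\|_{\L^p}\le\int_{\R^n}\Phi_1(z)\,\|f-f(\cdot-\sqrt t z)\|_{\L^p}\,\d z .
\end{align*}
Continuity of translations in $\L^p$ for $p<\infty$ makes the integrand tend to $0$ pointwise in $z$. It is also bounded by $2\|f\|_{\L^p}\Phi_1(z)$, so dominated convergence finishes part (1). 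An alternative is to prove the claim for $C_c(\R^n)$ by uniform continuity and extend it by density, using $\|f\ast\Phi_t\|_{\L^p}\le\|f\|_{\L^p}$.

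For the first claim of part (2), I use the definition $(f\ast\Phi_t,\chi)=(f,\tilde\Phi_t\ast\chi)=(f,\Phi_t\ast\chi)$, since $\Phi_t$ is even. It then suffices to show $\Phi_t\ast\chi\to\chi$ in $\mathcal S(\R^n)$ for each $\chi\in\mathcal S(\R^n)$, because $f$ is continuous on $\mathcal S$. On the Fourier side, $\widehat{\Phi_t\ast\chi}(\xi)=e^{-4\pi^2t|\xi|^2}\hat\chi(\xi)$ (with the $e^{-2\pi i x\xi}$ convention). The Fourier transform is a homeomorphism of $\mathcal S$, so I need that $(e^{-4\pi^2t|\xi|^2}-1)\hat\chi\to0$ in $\mathcal S$. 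Each seminorm of this difference involves derivatives of $e^{-ct|\xi|^2}-1$, and these are bounded by $C\,t(1+|\xi|)^{N}$ uniformly for $t\in(0,1]$. The rapid decay of $\hat\chi$ absorbs the polynomial factor, so every seminorm is $O(t)$. This is routine bookkeeping and the main place where care is needed.

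For the second claim of part (2), I use the semigroup property $\Phi_t\ast\Phi_s=\Phi_{t+s}$, which gives $f\ast\Phi_{t+s}=(f\ast\Phi_s)\ast\Phi_t$ in $\mathcal S'$. Assumption \eqref{diff-lem-assumption} forces $\|f\ast\Phi_s\|_{\L^p}<\infty$ for a.e. $s\in(0,1)$, so I can pick a sequence $s_k\downarrow0$ with $g_k:=f\ast\Phi_{s_k}\in\L^p$. For any $t>0$, take $k$ with $s_k<t$. Then $f\ast\Phi_t=g_k\ast\Phi_{t-s_k}$ is the convolution of an $\L^p$ function with an $\L^1$ kernel, so Young's inequality puts it in $\L^p$.

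For continuity in $t$, fix $t_0>0$ and choose $s_k<t_0/2$. For $t$ near $t_0$ I write $f\ast\Phi_t=g_k\ast\Phi_{t-s_k}$, which gives
\begin{align*}
\|f\ast\Phi_t-f\ast\Phi_{t_0}\|_{\L^p}\le\|g_k\|_{\L^p}\,\|\Phi_{t-s_k}-\Phi_{t_0-s_k}\|_{\L^1}.
\end{align*}
The right-hand side tends to $0$ because $\tau\mapsto\Phi_\tau$ is continuous into $\L^1$ on $(0,\infty)$, by dominated convergence (or Scheffé's lemma). The one point that needs justification is that these distributional identities agree with the $\L^p$ objects. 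This holds because a convolution of an $\L^p$ function with a Schwartz or integrable kernel defines the same tempered distribution as the classical integral.
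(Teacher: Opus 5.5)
Your proposal is correct and follows essentially the same route as the paper. You use the approximate-identity argument for (1). For the first claim of (2), you use the evenness of $\Phi_t$ to reduce to $\Phi_t\ast\chi\to\chi$ in $\mathcal S(\R^n)$. For the second claim, you use the semigroup property, a good $s$ with $f\ast\Phi_s\in\L^p(\R^n)$, and Young's inequality.

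The only differences are in the sub-steps. You prove the $\mathcal S$-convergence on the Fourier side, whereas the paper just cites it from Grafakos. You get continuity in $t$ from the $\L^1$-continuity of $\tau\mapsto\Phi_\tau$, whereas the paper combines part (1) with the contraction bound $\|g\ast\Phi_r\|_{\L^p}\le\|g\|_{\L^p}$.
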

\begin{proof}
(i) Since
$$\Phi_t(x)=t^{-n/2}\Phi_1\Big(\frac{x}{\sqrt t}\Big),\quad\int_{\mathbb R^n}\Phi_1(x)\,\d x=1,$$
the family $(\Phi_t)_{t>0}$ is an approximate identity in the sense of \cite[Definition~1.2.15]{Grafakos2014a}. As a result, part (1) follows directly from \cite[Theorem 1.2.19(1)]{Grafakos2014a}.

(ii) We now prove the first assertion in part (2). For every $\chi\in\mathcal S(\mathbb R^n)$, applying \cite[Exercise 2.3.2]{Grafakos2014a} with
$\varepsilon=\sqrt t$, we have
$$\Phi_t\ast\chi\rightarrow\chi\quad\mbox{in }\mathcal S(\mathbb R^n)$$
as $t\to0^+$. Since the heat kernel is symmetric, we obtain
$$\tilde{\Phi}_t(x)=\Phi_t(-x)=\Phi_t(x),\quad x\in\R^n,\,t>0,$$
and hence, as $t\to0^+$,
$$(f\ast\Phi_t,\chi)=( f,\tilde{\Phi}_t\ast\chi)=( f,\Phi_t\ast\chi)\rightarrow (f,\chi),$$
Thus,
$$f\ast\Phi_t\rightarrow f\quad\mbox{in }\mathcal S'(\mathbb R^n),$$
as $t\to0^+$.

(iii) It remains to prove the second assertion in part (2). For $g\in \L^p(\mathbb R^n)$ and $r>0$, Young's inequality and $\|\Phi_r\|_{\L^1}=1$ imply
$$\|g\ast\Phi_r\|_{\L^p}\leq\|g\|_{\L^p}.$$
Together with part~\textup{(1)}, these imply that
$$(0,\infty)\ni r\mapsto g\ast\Phi_r\in \L^p(\mathbb R^n)$$
is continuous.

By the assumption \eqref{diff-lem-assumption}, $f\ast\Phi_s\in \L^p(\mathbb R^n)$ for a.e. $s\in(0,1)$. Fix $t_0>0$, and choose $0<s<\min\left\{t_0/2,1\right\}$ such that $g:=f\ast\Phi_s\in \L^p(\mathbb R^n)$. For every $t>s$, the semigroup property yields
$$f\ast\Phi_t=(f\ast\Phi_s)\ast\Phi_{t-s}=g\ast \Phi_{t-s}.$$
Thus $f\ast\Phi_t\in \L^p(\mathbb R^n)$ for $t>s$, and the map $t\mapsto f\ast\Phi_t$ is continuous near $t_0$ in $\L^p(\R^n)$. Since $t_0>0$ is arbitrary, the desired conclusion follows.
\end{proof}

\begin{remark}\label{remark-diff-lem}
In the remainder of this section, we do not use the second assertion of Lemma \ref{diff-lem}(2); however, see Remark \ref{remark-cor-random} for an application of this assertion.
\end{remark}

With the preceding preparations in place, we are now ready to present the proof of Theorem \ref{main-MS}.
\begin{proof}[Proof of  Theorem \ref{main-MS}]
Let $p\in(1,\infty)$, $\alpha\in(0,1)$ and  $f\in \cup_{\alpha\in(0,1)}\W^{-\alpha,p}(\R^n)$. Set $\beta:= \frac{\alpha p}{2}$, and set $\phi(t):=\|f\ast\Phi_t\|_{\L^p}^p$ for any $t>0$. Then
\begin{align}\label{pf-main-ms-00}
\alpha \|f\|_{\W^{-\alpha,p}}^p&=\frac{2\beta}{p}\int_0^1 t^{\beta-1}\phi(t)\,\d t\cr
&=\frac{2\beta}{p}\int_0^\infty e^{-\beta s}\phi(e^{-s})\,\d s\cr
&=\frac{2\beta}{p}\int_0^\infty e^{-\beta s}\psi(s)\,\d s,
\end{align}
where we used the change of variables $t=e^{-s}$ and then denoted $\psi(s)=\phi(e^{-s})$.

Observe that $t\mapsto\phi(t)$ is non-increasing on $(0,\infty)$. Indeed, for any $0< s<t$, by Young's inequality, we have
$$\phi(t)=\|f\ast\Phi_t\|_{\L^p}^p=\|f\ast\Phi_{s}\ast\Phi_{t-s}\|_{\L^p}^p\leq \|f\ast\Phi_{s}\|_{\L^p}^p=\phi(s),$$
where we also used the semigroup property $\Phi_{t+s}=\Phi_{t}\ast\Phi_{s}$ and the fact that $\|\Phi_s\|_{\L^1}=1$ for any $s,t>0$. Hence, $t\mapsto\psi(t)$ is a non-decreasing function on $(0,\infty)$.

By the assumption $\liminf_{\alpha\rightarrow0^+}\alpha\|f\|_{\W^{-\alpha,p}}^p<\infty$, it follows from \eqref{pf-main-ms-00} that there exist a constant $C>0$ and a sequence $\beta_k\rightarrow0^+$ as $k\rightarrow\infty$ such that
\begin{align*}
\beta_k\int_0^\infty e^{-\beta_k s}\psi(s)\,\d s\leq C.
\end{align*}
We \textbf{claim} that $\sup_{s\in[0,\infty)}\psi(s)<\infty$. If not, then $\psi$ is unbounded as $s\to\infty$. Since $\psi$ is non-decreasing, we have $\lim_{s\rightarrow\infty}\psi(s)=\infty$. For every $M>0$, there exists some number $N>0$ such that $\psi(s)\geq M$ for all $s\geq N$. Then
\begin{align*}
\beta\int_0^\infty e^{-\beta s}\psi(s)\,\d s\geq M\beta\int_{N}^\infty e^{-\beta s}\,\d s=Me^{-N\beta},\quad \beta>0.
\end{align*}
Taking $\beta=\beta_k$ and letting $k\to \infty$ leads to
\begin{align*}
\liminf_{k\rightarrow\infty}\beta_k\int_0^\infty e^{-\beta_k s}\psi(s)\,\d s\geq M,
\end{align*}
which contradicts the assumption since $M$ is arbitrary. Thus, the \textbf{claim} is proved. Since a bounded monotone function has a limit, we denote
\begin{align}\label{pf-main-ms-a}
L=\lim_{s\rightarrow\infty} \psi(s)\in[0,\infty).
\end{align}

We now prove that
\begin{align}\label{pf-main-ms-0}
\lim_{\beta\rightarrow0^+}\beta \int_0^\infty e^{-\beta  s}\psi(s)\,\d s
=\lim_{\beta\rightarrow0^+}\int_0^\infty e^{-u}\psi\left(\frac{u}{\beta}\right)\,\d u
=L.
\end{align}
The first equality follows from the change of variables $u=\beta s$, and the second equality can be derived by applying the dominated convergence theorem due to the boundedness of $\psi$ and \eqref{pf-main-ms-a} (see also Remark \ref{AT}).

Combining \eqref{pf-main-ms-0} with \eqref{pf-main-ms-00}, we have
\begin{align}\label{pf-main-ms-3}
\lim_{\alpha\rightarrow0^+}\alpha \|f\|_{\W^{-\alpha,p}}^p=\frac{2}{p}L.
\end{align}

It remains to show that $f\in \L^p(\R^n)$ and $L=\|f\|_{\L^p}^p$. Since $f\in\W^{-\alpha_0,p}(\R^n)$ for some $\alpha_0\in(0,1)$, it follows that $f$ is a tempered distribution. To see this, by \cite[Theorem 6.74]{Leoni2023} or \cite[Proposition 2.3.23]{Grafakos2014a}, the Schwartz space $\mathcal{S}(\R^n)$ is dense in $\W^{\alpha_0,q}(\R^n)$, where $q=\frac{p}{p-1}$. Consequently, the dual space $\W^{-\alpha_0,p}(\R^n)=\left(\W^{\alpha_0,q}(\R^n)\right)'$ embeds continuously into $\mathcal{S}'(\R^n)$. Hence, every element of $\W^{-\alpha_0,p}(\R^n)$ is a tempered distribution.

Since $f$ is a tempered distribution, by employing Lemma \ref{diff-lem}(2), we see that $f\ast\Phi_t\rightarrow f$ in the sense of distributions as $t\rightarrow0^+$. Moreover, \eqref{pf-main-ms-a} and the monotonicity of $\phi$ show that  the family $\{f\ast\Phi_t:\ t>0\}$ is bounded in $\L^p(\R^n)$. Since $\L^p(\R^n)$ is reflexive whenever $p\in(1,\infty)$, by the Banach--Alaoglu theorem, there exists a sequence $t_k\rightarrow0^+$ as $k\rightarrow\infty$ such that $f\ast\Phi_{t_k}$ converges weakly in $\L^p(\R^n)$ to some function $g\in \L^p(\R^n)$ as $k\rightarrow\infty$.  By uniqueness of distributional limits, we have $f=g\in\L^p(\R^n)$.

Finally, since $f\in \L^p(\R^n)$, it follows from Lemma \ref{diff-lem}(1) that $f\ast\Phi_t\to f$ in $\L^p(\R^n)$ as $t\to0^+$.  Combining this with \eqref{pf-main-ms-a}  and \eqref{pf-main-ms-3}, we obtain \eqref{main-MS-1}.

Therefore, we complete the proof.
\end{proof}

We give some remarks on Theorem \ref{main-MS} and its proof.
\begin{remark}\label{AT}
(1) In the proof of Theorem \ref{main-MS}, \eqref{pf-main-ms-a} and \eqref{pf-main-ms-0} are indeed equivalent, which follows directly from a more general theory, namely the Abelian--Tauberian theorem (see  \cite[Theorem 2, Section 5, Chapter XIII]{Feller2}), which can be adapted into the following simpler form. \emph{Let $\psi : [0, \infty) \to [0, \infty)$ be a non-decreasing function such that for every $\beta > 0$ the integral
$$\int_0^\infty e^{-\beta s} \psi(s) \, \d s$$
converges. Then the following two limits are equivalent:
\begin{enumerate}
\item[(i)] $\displaystyle \lim_{\beta \to 0^+} \beta \int_0^\infty e^{-\beta s} \psi(s) \, \d s = A$ (exists and is finite);
\item[(ii)] $\displaystyle \lim_{s \to \infty} \psi(s) = A$.
\end{enumerate}}
\noindent For a detailed treatment of Abelian and Tauberian theorems, see, e.g., \cite[Chapter II.7]{Tenenbaum15}.

(2) By using the Fourier transform, we can easily derive the limiting formula in Theorem \ref{main-MS} in the particular Hilbert case ($p=2$), i.e.,
\begin{eqnarray*}
\lim_{\alpha\rightarrow0^+}\alpha\|f\|_{\W^{-\alpha,2}}^2=\|f\|^2_{\L^2},\quad f\in\L^2(\R^n).
\end{eqnarray*}
\begin{proof}
Let $f\in\L^2(\R^n)$, and denote its Fourier transform by $\hat{f}(\xi)=\int_{\R^n} f(x)e^{-i\langle x,\xi\rangle}\,\d x$ for any $\xi\in\R^n$. Recall that $\hat{\Phi}_t(\xi)=e^{-t\|\xi\|^2}$ and $\widehat{f\ast\Phi_t}(\xi)=\hat{f}(\xi)\hat{\Phi}_t(\xi)$ for any $t>0$ and any $\xi\in\R^n$. Using Plancherel's identity and the Fubini--Tonelli theorem, we compute
\begin{eqnarray*}
\alpha\|f\|_{\W^{-\alpha,2}}^2&=&\alpha\int_0^1 t^{\alpha-1}\|\hat{f}\hat{\Phi}_t\|_{\L^2}^2\,\d t\cr
&=& \int_{\R^n} |\hat{f}(\xi)|^2\bigg(\int_0^1 \alpha  t^{\alpha-1} e^{-2t\|\xi\|^2}\,\d t\bigg)\,\d\xi\cr
&=&\int_{\R^n} |\hat{f}(\xi)|^2{\rm J}(\alpha,\xi)\,\d\xi,\quad \alpha>0,
\end{eqnarray*}
where we denoted ${\rm J}(\alpha,\xi)= \alpha\int_0^1 t^{\alpha-1} e^{-2t\|\xi\|^2}\,\d t$.  For every $\alpha>0$ and every $\xi\in\R^n$, we have the uniform bound $|{\rm J}(\alpha,\xi)|\leq \alpha\int_0^1 t^{\alpha-1}\,\d t=1$. Now fix $\xi\in\R^n$. To find the pointwise limit of ${\rm J}(\alpha,\xi)$ as $\alpha\to0^+$, by the elementary inequality $|e^{-a}-1|\leq a$ for any $a\geq0$, we estimate
\begin{eqnarray*}
|{\rm J}(\alpha,\xi)-1|&=&\left|\alpha\int_0^1 t^{\alpha-1} (e^{-2t\|\xi\|^2}-1)\,\d t\right|\cr
&\leq& 2\|\xi\|^2\alpha\int_0^1 t^{\alpha} \,\d t=2\|\xi\|^2\frac{\alpha}{\alpha+1},\quad \alpha>0.
\end{eqnarray*}
which clearly implies that for every $\xi\in\R^n$, $\lim_{\alpha\to0^+}{\rm J}(\alpha,\xi)=1$. Combining these with the dominated convergence theorem, we conclude that
\begin{eqnarray*}
\lim_{\alpha\to0^+}\alpha\|f\|_{\W^{-\alpha,2}}^2=\int_{\R^n} |\hat{f}(\xi)|^2\,\d\xi=\|f\|_{\L^2}^2,
\end{eqnarray*}
where the last equality is again Plancherel's identity.
\end{proof}
\end{remark}

\section{An abstract limiting principle}\label{sec-extension}\hskip\parindent
The previous result relies heavily on the standard heat kernel and the specific structure of $\R^n$. In this section, by adopting a new approach, we generalize the asymptotic formula in Theorem \ref{main-MS} to the setting of abstract measure spaces equipped with a family of general operators.

Let $(E,\mathcal{E})$ be a measurable space endowed with a $\sigma$-finite measure $\mu:\mathcal{E}\rightarrow[0,\infty]$.  For $p\in[1,\infty)$, let $\L^p(E,\mu)$ be the Lebesgue space over $(E,\mathcal{E},\mu)$ endowed with the norm $\|\cdot\|_{\L^p(\mu)}$ given by
$$\|f\|_{\L^p(\mu)}=\bigg(\int_E |f(x)|^p\,\mu(\d x)\bigg)^{1/p},\quad p\in[1,\infty).$$

Let $\mathcal{B}(0,\infty)$ be the Borel $\sigma$-algebra on $(0,\infty)$. Fix $p\in[1,\infty)$. Let $(S_t)_{t>0}$ be a family of operators on $\L^p(E,\mu)$ satisfying the following assumptions:
\begin{itemize}[leftmargin=2.2cm]
\item[$(\mathfrak{p}.1)$](\emph{Measurability}) for every $f\in \L^p(E,\mu)$, the functions $S_tf$ admit representatives for which $(0,\infty)\times E\ni (t,x)\mapsto S_tf(x)$ is jointly measurable with respect to the product $\sigma$-algebra $\mathcal{B}(0,\infty)\times\mathcal{E}$;

\item[$(\mathfrak{p}.2)$](\emph{Boundedness}) there exists a constant $C>0$ such that
$$\|S_tf\|_{\L^p(\mu)}\leq C\|f\|_{\L^p(\mu)},\quad t\in(0,1],\,f\in \L^p(E,\mu);$$

\item[$(\mathfrak{p}.3)$](\emph{Continuity}) $\lim_{t\to0^+}\|S_tf\|_{\L^p(\mu)}=\|f\|_{\L^p(\mu)}$  for every $f\in \L^p(E,\mu)$.
\end{itemize}

\begin{remark}
A canonical source of examples is give by strongly continuous contraction semigroups on $\L^p(E,\mu)$, including heat semigroups and Markov semigroups associated with Dirichlet forms, as commonly encountered in functional analysis and probability theory; see, e.g., \cite{BGL2014,FOT2011,MR1992,Yoshida}. In contrast with the proof of Theorem \ref{main-MS}, however, the abstract argument below does not require the operators $S_t$ to be linear, nor does it rely on the contraction property (which requires $\|S_tf\|_{\L^p(\mu)} \leq \|f\|_{\L^p(\mu)}$ for any $t>0$ and any $f\in \L^p(E,\mu)$),  the semigroup property (i.e., $S_0=I$ and $S_{t_1}S_{t_2}=S_{t_1+t_2}$ for any $t_1,t_2\geq0$, where $I$ indicates the identity operator), or the strong continuity (which demands $\lim_{t\to0^+}\|S_tf-f\|_{\L^p(\mu)}=0$ for any $f\in \L^p(E,\mu)$).
\end{remark}

For $\alpha>0$ and $p\in[1,\infty)$, we introduce the following negative-order functional
\begin{eqnarray}\label{general-norm}
\|f\|_{\mathcal{W}^{-\alpha,p}(\mu)}=\bigg(\int_0^1 t^{\frac{\alpha p}{2}-1}\| S_tf\|_{\L^p(\mu)}^p\,\d t\bigg)^{\frac{1}{p}},\quad f\in\L^p(E,\mu).
\end{eqnarray}
A simple estimate shows that for every $f\in \L^p(E,\mu)$, property $(\mathfrak{p}.2)$ yields
\begin{eqnarray*}
\|f\|_{\mathcal{W}^{-\alpha,p}(\mu)}^p
&=&\int_0^1 t^{\frac{\alpha p}{2}-1}\| S_tf\|_{\L^p(\mu)}^p\,\d t\\
&\leq& C\|f\|_{\L^p(\mu)}^p\int_0^1 t^{\frac{\alpha p}{2}-1}\,\d t=\frac{2}{p\alpha}C \|f\|_{\L^p(\mu)}^p<\infty,
\end{eqnarray*}
for some constant $C>0$ (independent of $t$ and $f$).

If, in addition, the family $(S_t)_{t>0}$ possesses the following properties: for every  $t>0$,
$$\|S_t(\lambda f)\|_{\L^p(\mu)}= |\lambda| \|S_t f\|_{\L^p(\mu)},\quad\lambda\in\R,\,f\in \L^p(E,\mu),$$
and
$$\|S_t(f+g)\|_{\L^p(\mu)}\leq \|S_t f\|_{\L^p(\mu)}+\|S_t g\|_{\L^p(\mu)},\quad f,g\in \L^p(E,\mu),$$
 then one can directly verify that $\|\cdot\|_{\mathcal{W}^{-\alpha,p}(\mu)}$ defines a norm on $\L^p(E,\mu)$. In particular, these conditions are automatically satisfied if each $S_t$ is a linear operator.

In particular, a concrete and important example arises when  $E=\R^n$, $\mathcal{E}$ is the Borel $\sigma$-algebra on $\R^n$, $\mu$ is the $n$-dimensional Lebesgue measure, and $(S_t)_{t>0}$ is the standard heat semigroup on $\R^n$, i.e., for $f\in \L^p(\R^n)$, define
$$S_t f=f\ast\Phi_t,\quad t>0,$$
where $\Phi_t$ is the heat kernel given by \eqref{GWF}. Then, for $\alpha\in(0,1)$ and $p\in(1,\infty)$, we have the identification
$$\|f\|_{\mathcal{W}^{-\alpha,p}(\mu)}=\|f\|_{\W^{-\alpha,p}},\quad f\in\L^p(\R^n),$$
where the negative Sobolev norm in the right-hand side is defined by \eqref{neg-sob-norm} when $\alpha\notin\mathbb{N}$. This observation provides the natural motivation for introducing the general definition \eqref{general-norm}.

The main result is the following partial extension of Theorem \ref{main-MS}, which generalizes the limiting formula \eqref{main-MS-1} in two ways: an abstract formulation, and the $p = 1$ endpoint for $f \in \L^p(E,\mu)$.
\begin{theorem}\label{main-gen-MS}
Let $p\in[1,\infty)$. For every $f\in\L^p(E,\mu)$,
\begin{equation*}
\lim_{\alpha\rightarrow0^+}\alpha\|f\|_{\mathcal{W}^{-\alpha,p}(\mu)}^p=\frac{2}{p}\|f\|_{\L^p(\mu)}^p.
\end{equation*}
\end{theorem}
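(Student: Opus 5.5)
Fix $f\in\L^p(E,\mu)$ and set $\phi(t):=\|S_tf\|_{\L^p(\mu)}^p$ for $t\in(0,1]$. In place of the monotonicity and Tauberian argument used for Theorem \ref{main-MS}, which relied on the semigroup property, the plan is a direct Abelian-type averaging argument resting only on $(\mathfrak{p}.1)$--$(\mathfrak{p}.3)$. First, $(\mathfrak{p}.1)$ and Tonelli's theorem show that $t\mapsto\phi(t)=\int_E|S_tf(x)|^p\,\mu(\d x)$ is Borel measurable on $(0,1]$, so the integral in \eqref{general-norm} is well defined. Second, $(\mathfrak{p}.2)$ gives the uniform bound $0\le\phi(t)\le C^p\|f\|_{\L^p(\mu)}^p=:M$ for $t\in(0,1]$. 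Third, $(\mathfrak{p}.3)$ gives $\phi(t)\to\|f\|_{\L^p(\mu)}^p=:A$ as $t\to0^+$.

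With $\beta:=\alpha p/2$ and the identity $\beta\int_0^1t^{\beta-1}\,\d t=1$, we can write
\begin{align*}
\alpha\|f\|_{\mathcal{W}^{-\alpha,p}(\mu)}^p-\frac{2}{p}A=\frac{2}{p}\,\beta\int_0^1 t^{\beta-1}\big(\phi(t)-A\big)\,\d t .
\end{align*}
Given $\varepsilon>0$, choose $\delta\in(0,1)$ such that $|\phi(t)-A|\le\varepsilon$ for all $t\in(0,\delta)$. We then split the integral at $\delta$. The contribution from $(0,\delta)$ is at most $\varepsilon\,\beta\int_0^\delta t^{\beta-1}\,\d t=\varepsilon\delta^\beta\le\varepsilon$. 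The contribution from $[\delta,1]$ is at most $(M+A)\,\beta\int_\delta^1t^{\beta-1}\,\d t=(M+A)(1-\delta^\beta)$, which tends to $0$ as $\beta\to0^+$ because $\delta$ is fixed. Hence the $\limsup$ of the left-hand side in absolute value is at most $\frac{2}{p}\varepsilon$, and letting $\varepsilon\to0$ gives the claim. Equivalently, after the substitution $t=u^{1/\beta}$ the expression becomes $\frac2p\int_0^1\phi(u^{1/\beta})\,\d u$, and since $u^{1/\beta}\to0$ for each $u\in(0,1)$, dominated convergence with dominating constant $M$ applies directly.

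The only delicate point is measurability of $t\mapsto\phi(t)$, which $(\mathfrak{p}.1)$ is designed to supply via Tonelli. The essential observation is that neither linearity nor monotonicity is needed here: because $f$ is already assumed to lie in $\L^p$, the uniform bound $(\mathfrak{p}.2)$ replaces the Tauberian side of the argument. The case $p=1$ needs no separate treatment, since reflexivity is never used.
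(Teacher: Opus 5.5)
Your proposal is correct and follows essentially the same route as the paper. Both arguments get measurability of $t\mapsto\|S_tf\|_{\L^p(\mu)}^p$ from $(\mathfrak{p}.1)$ and Tonelli, the uniform bound from $(\mathfrak{p}.2)$ and the endpoint limit from $(\mathfrak{p}.3)$, and then apply dominated convergence after rescaling. Your substitution $t=u^{1/\beta}$ on $(0,1)$ is the paper's $t=e^{-u/\beta}$ on $(0,\infty)$ followed by $v=e^{-u}$, and your $\varepsilon$--$\delta$ splitting is the same estimate written out by hand.
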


Although the operators $S_t$ are allowed to be nonlinear and need not form a semigroup, the proof of Theorem \ref{main-gen-MS} only uses the measurability, the boundedness and the endpoint continuity of the scalar function $t\mapsto \|S_tf\|_{\L^p(\mu)}$. We include the short argument for completeness.
\begin{proof}[Proof of Theorem \ref{main-gen-MS}]
Let $p\in[1,\infty)$ and $f\in \L^p(E,\mu)$. Set
$$\beta:=\frac{\alpha p}{2},\quad\quad g(t):=\|S_tf\|_{\L^p(\mu)}^p,\quad t\in(0,1].$$
By $(\mathfrak{p}.1)$ and the Fubini--Tonelli theorem, $g$ is measurable. By $(\mathfrak{p}.2)$,
$$0\leq g(t)\leq C^p\|f\|_{\L^p(\mu)}^p,\quad t\in(0,1],$$
where $C>0$ is a constant, and by $(\mathfrak{p}.3)$,
$$\lim_{t\to0^+}g(t)=\|f\|_{\L^p(\mu)}^p.$$
Using the changes of variables $t=e^{-s}$ and then $u=\beta s$, we obtain
\begin{align*}
\alpha\|f\|_{\mathcal{W}^{-\alpha,p}(\mu)}^p
&=\alpha\int_0^1 t^{\beta-1}g(t)\,\d t\\
&=\alpha\int_0^\infty e^{-\beta s}g(e^{-s})\,\d s\\
&=\frac{2}{p}\int_0^\infty e^{-u}g(e^{-u/\beta})\,\d u.
\end{align*}
For every $u>0$, $e^{-u/\beta}\to0$ as $\beta\to0^+$. Hence,
$$g(e^{-u/\beta})\to\|f\|_{\L^p(\mu)}^p,\quad\mbox{as }\beta\to0^+.$$
Moreover, the integrand $e^{-u}g(e^{-u/\beta})$ is dominated by $C^p e^{-u}\|f\|_{\L^p(\mu)}^p$. The dominated convergence theorem gives
\begin{align*}
\lim_{\alpha\to0^+}\alpha\|f\|_{\mathcal{W}^{-\alpha,p}(\mu)}^p=\frac{2}{p}\int_0^\infty e^{-u}\|f\|_{\L^p(\mu)}^p\,\d u
=\frac{2}{p}\|f\|_{\L^p(\mu)}^p.
\end{align*}
This proves the theorem.
\end{proof}

\section{Applications}\label{sec-applications}\hskip\parindent
In this section we provide several consequences and examples of Theorems \ref{main-MS} and \ref{main-gen-MS}. The first group of applications uses the part of Theorem \ref{main-MS} that a distribution whose negative Sobolev norms have the critical growth rate $\alpha^{-1/p}$ must in fact be an $L^p$ function. The second group illustrates the flexibility of the abstract formula in Theorem \ref{main-gen-MS} by applying it to nonlinear maximal operators and martingales.

\subsection{Regularity and absolute continuity consequences}
\begin{corollary}[A criterion for absolute continuity]\label{cor-abs-cont}
Let $p\in(1,\infty)$, and let $\nu$ be a finite signed Radon measure on $\R^n$, regarded as an element of $\mathcal{S}'(\R^n)$. Suppose that $\nu\in \cup_{\alpha\in(0,1)}\W^{-\alpha,p}(\R^n),$  and
\begin{align}\label{cor-liminf-cond}
\liminf_{\alpha\to0^+}\alpha\|\nu\|_{\W^{-\alpha,p}}^p<\infty.
\end{align}
Then $\nu$ is absolutely continuous with respect to the $n$-dimensional Lebesgue measure. More precisely, there exists $f_\nu\in\L^p(\R^n)$ such that
$\nu=f_\nu \d x,$ and
\begin{align*}
\lim_{\alpha\to0^+}\alpha\|\nu\|_{\W^{-\alpha,p}}^p
=\frac{2}{p}\|f_\nu\|_{\L^p}^p.
\end{align*}
\end{corollary}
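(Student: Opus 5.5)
The plan is to apply Theorem \ref{main-MS} directly to the tempered distribution defined by $\nu$ and then identify the resulting $\L^p$ function with $\nu$ as a measure.

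First, I will view $\nu$ as the tempered distribution $(\nu,\chi)=\int_{\R^n}\chi\,\d\nu$ for $\chi\in\mathcal{S}(\R^n)$. This is well defined because $\nu$ is finite, so $|(\nu,\chi)|\le|\nu|(\R^n)\sup|\chi|$. The hypotheses of the corollary are then exactly those of Theorem \ref{main-MS} with $f=\nu$: membership in $\cup_{\alpha\in(0,1)}\W^{-\alpha,p}(\R^n)$ and the $\liminf$ bound \eqref{cor-liminf-cond}. Theorem \ref{main-MS} therefore gives some $g\in\L^p(\R^n)$ that coincides with $\nu$ in $\mathcal{S}'(\R^n)$. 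It also gives the limit formula with $\|g\|_{\L^p}^p$ on the right-hand side. I set $f_\nu:=g$.

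Second, I need to show that $\nu=f_\nu\,\d x$ as measures, and not merely as distributions. I know that $\int\chi\,\d\nu=\int\chi f_\nu\,\d x$ for all $\chi\in\mathcal{S}(\R^n)$, and in particular for all $\chi\in C_c^\infty(\R^n)$. Now take any $\varphi\in C_c(\R^n)$ and mollify it: $\varphi_\varepsilon=\varphi\ast\eta_\varepsilon\in C_c^\infty(\R^n)$. These functions are supported in a fixed compact set $K$ and converge to $\varphi$ uniformly. On the measure side, $\int\varphi_\varepsilon\,\d\nu\to\int\varphi\,\d\nu$ because $|\nu|$ is finite. On the function side, $\int\varphi_\varepsilon f_\nu\,\d x\to\int\varphi f_\nu\,\d x$ because $f_\nu\in\L^p\subset\L^1_{\rm loc}$, so $f_\nu\mathbbm{1}_K\in\L^1$. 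Hence the two signed Radon measures $\nu$ and $f_\nu\,\d x$ agree on $C_c(\R^n)$. Note that $f_\nu\,\d x$ is locally finite, which is enough for the Riesz representation uniqueness, so $\nu=f_\nu\,\d x$ on Borel sets. Since $\nu$ is finite, this identity forces $f_\nu\in\L^1$ as well, although that is not needed. Absolute continuity with respect to Lebesgue measure follows immediately.

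The only potential subtlety is this identification step, going from equality in $\mathcal{S}'$ to equality as Borel measures. The density argument above handles it. The limit formula is then read off directly from Theorem \ref{main-MS} with $\|f\|_{\L^p}=\|f_\nu\|_{\L^p}$.
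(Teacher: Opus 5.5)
Your proposal is correct and follows the paper's proof: apply Theorem~\ref{main-MS} to $\nu$ viewed as a tempered distribution, then upgrade equality in $\mathcal{S}'(\R^n)$ to equality of measures. The paper asserts this last step in a single sentence, and your mollification argument, together with Riesz uniqueness for locally finite measures and the observation that finiteness of $\nu$ forces $f_\nu\in\L^1(\R^n)$, simply supplies the omitted details.
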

\begin{proof}
From the assumptions, we may apply Theorem \ref{main-MS} to the distribution $\nu$. Hence $\nu$ coincides, as a distribution, with some function $f_\nu\in\L^p(\R^n)$, and the limiting identity follows from \eqref{main-MS-1}. Since $\nu$ is originally a finite signed measure, the equality of distributions implies the equality of measures $\nu=f_\nu \d x$.
\end{proof}

\begin{example}[Dirac measures]\label{eg-dirac}
Corollary \ref{cor-abs-cont} rules out singular measures. This can be seen explicitly for the Dirac measure $\delta_0$ at the point $0\in\R^n$. For $t>0$, since $\delta_0\ast\Phi_t=\Phi_t$, we have
\begin{align*}
\|\delta_0\ast\Phi_t\|_{\L^p}^p=p^{-n/2}(4\pi t)^{-n(p-1)/2}.
\end{align*}
Consequently,
\begin{align*}
\|\delta_0\|_{\W^{-\alpha,p}}^p=p^{-n/2}(4\pi)^{-n(p-1)/2} \int_0^1 t^{\frac{\alpha p-n(p-1)}{2}-1}\,\d t.
\end{align*}
This integral is finite if and only if
$$\alpha>\frac{n(p-1)}{p}.$$
In particular, no neighborhood of $\alpha=0$ can satisfy the condition \eqref{cor-liminf-cond}.
\end{example}

The next result may be useful in the study of sample paths with limited spatial regularity, for example in establishing density properties of empirical or occupation measures (see Example \ref{eg-fbm}) and in the investigation of stochastic partial differential equations, particularly in fluid mechanics (see, e.g., \cite{FL2023}). For a comprehensive study of Lebesgue--Bochner space, refer to \cite{HvNVW2016}.
\begin{corollary}[A random-distribution criterion]\label{cor-random}
Let $p\in(1,\infty)$, and let $X$ be an $\mathcal{S}'(\R^n)$-valued random variable defined on a probability space $(\Omega,\mathcal{F},\mathbb{P})$. Suppose that there exists a sequence $(\alpha_j)_{j\in\mathbb{N}}\subset(0,1)$ such that $\alpha_j\downarrow0$ and $X\in\W^{-\alpha_j,p}(\R^n)$ almost surely for every $j\in\mathbb{N}$, and
\begin{align}\label{cor-random-cond}
\sup_{j\in\mathbb{N}}\E\left[\alpha_j\|X\|_{\W^{-\alpha_j,p}}^p\right]<\infty.
\end{align}
Assume moreover that, for every $t>0$, the convolution $X\ast\Phi_t$ admits a strongly measurable $\L^p(\R^n)$-valued version. Then there exists an $\L^p(\R^n)$-valued random variable $Y$ such that $X=Y$ in $\mathcal{S}'(\R^n)$ almost surely.
Moreover,
\begin{align}\label{cor-random-est}
\E\|Y\|_{\L^p}^p\leq \frac{p}{2}\liminf_{j\to\infty}\E\left[\alpha_j\|X\|_{\W^{-\alpha_j,p}}^p\right].
\end{align}
\end{corollary}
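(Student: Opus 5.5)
Set $\beta_j:=\alpha_j p/2$, $\phi_\omega(t):=\|X(\omega)\ast\Phi_t\|_{\L^p}^p$ and $\psi_\omega(s):=\phi_\omega(e^{-s})$. The idea is to run the proof of Theorem \ref{main-MS} pathwise, combine it with Fatou's lemma to control $\lim_{s\to\infty}\psi_\omega(s)$ in mean, and then take $Y$ to be an $\L^p$-valued limit of $X\ast\Phi_t$ as $t\to0^+$.

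\emph{Step 1 (pathwise reduction).} Fix $\omega$ in the full-measure event on which $X(\omega)\in\W^{-\alpha_j,p}(\R^n)$ for every $j$. By \eqref{pf-main-ms-00} and the monotonicity argument in the proof of Theorem \ref{main-MS} (Young's inequality and the semigroup property), $\psi_\omega$ is non-decreasing. Hence $L(\omega):=\lim_{s\to\infty}\psi_\omega(s)=\sup_{t>0}\phi_\omega(t)\in[0,\infty]$ exists, and \eqref{pf-main-ms-0} holds in the form
\[
L(\omega)\le\liminf_{j\to\infty}\beta_j\int_0^\infty e^{-\beta_j s}\psi_\omega(s)\,\d s=\frac p2\liminf_{j\to\infty}\alpha_j\|X(\omega)\|_{\W^{-\alpha_j,p}}^p.
\]
The inequality holds even when $L(\omega)=\infty$: use Fatou, or the claim argument. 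To see that $L$ is measurable, note that $\phi_\omega$ is continuous in $t$ by Lemma \ref{diff-lem}(2) (this is the intended use mentioned in Remark \ref{remark-diff-lem}), so $L=\sup_{t\in\mathbb{Q}_{>0}}\phi(t)$. Each $\phi(t)$ is measurable because it is the $p$-th power of the norm of the strongly measurable version of $X\ast\Phi_t$.

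\emph{Step 2 (integrability).} Fatou's lemma gives
\[
\E L\le\frac p2\liminf_j\E\big[\alpha_j\|X\|_{\W^{-\alpha_j,p}}^p\big],
\]
which is finite by \eqref{cor-random-cond}. Thus $L<\infty$ almost surely. On that event the pathwise conclusion of Theorem \ref{main-MS} applies: its liminf hypothesis follows from the liminf along $\alpha_j$, since the proof only uses a sequence. Therefore $X(\omega)\in\L^p(\R^n)$ and $\|X(\omega)\|_{\L^p}^p=L(\omega)$.

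\emph{Step 3 (measurable version), the main obstacle.} Define $Y:=\lim_{k\to\infty}X\ast\Phi_{1/k}$ in $\L^p(\R^n)$ on the event $\{L<\infty\}$, and $Y:=0$ elsewhere. By Lemma \ref{diff-lem}(1), on $\{L<\infty\}$ this limit exists and equals $X(\omega)$. The delicate point is that the given strongly measurable versions agree with $X\ast\Phi_{1/k}$ only almost surely for each $k$. Since only countably many indices $k$ are involved, the exceptional null sets can be collected into a single null set. A pointwise limit of strongly measurable functions is strongly measurable, so $Y$ is strongly measurable. Off null sets, $Y=X$ in $\mathcal{S}'(\R^n)$, and
\[
\E\|Y\|_{\L^p}^p=\E L,
\]
which together with Step 2 gives \eqref{cor-random-est}.

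One further technicality needs attention. In Step 1, measurability of $L$ requires continuity of the chosen versions at rational times. Working with the sup over the countable set $\mathbb{Q}_{>0}$, with all null sets collected into one, avoids needing any continuity of the versions themselves: pathwise continuity of $\phi_\omega$ makes the supremum over rational times equal to the supremum over all $t>0$.
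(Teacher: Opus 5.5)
Your proposal is correct and follows essentially the same route as the paper. Both arguments use Fatou's lemma to get pathwise finiteness, apply Theorem \ref{main-MS} $\omega$ by $\omega$, define $Y$ as the $\L^p$-limit of the measurable versions of $X\ast\Phi_{t_m}$, and use Fatou again for \eqref{cor-random-est}. The only variation is that you route Fatou through $L=\sup_{t>0}\|X\ast\Phi_t\|_{\L^p}^p=\frac{p}{2}\lim_{\alpha\to0^+}\alpha\|X\|_{\W^{-\alpha,p}}^p$ rather than through the Sobolev norms directly, which amounts to the same thing. Incidentally, measurability of $L$ needs only the monotonicity of $t\mapsto\|X(\omega)\ast\Phi_t\|_{\L^p}^p$, which gives $L=\sup_{k}\|X\ast\Phi_{1/k}\|_{\L^p}^p$, not the continuity from Lemma \ref{diff-lem}(2).
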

\begin{proof}
By Fatou's lemma and \eqref{cor-random-cond},
\begin{align*}
\E\left[\liminf_{j\to\infty}\alpha_j\|X\|_{\W^{-\alpha_j,p}}^p\right]
\leq\liminf_{j\to\infty}\E\left[\alpha_j\|X\|_{\W^{-\alpha_j,p}}^p\right]
<\infty.
\end{align*}
Hence, for almost every $\omega\in\Omega$,
\begin{align}\label{cor-random-path}
\liminf_{j\to\infty}\alpha_j\|X(\omega)\|_{\W^{-\alpha_j,p}}^p<\infty.
\end{align}
For such an $\omega$, Theorem \ref{main-MS} applies to the distribution $X(\omega)$ and gives an element $Y(\omega)\in\L^p(\R^n)$ such that
$$X(\omega)=Y(\omega)\quad\text{in }\mathcal{S}'(\R^n),$$
and
\begin{align}\label{cor-random-path-limit}
\lim_{\alpha\to0^+}\alpha\|X(\omega)\|_{\W^{-\alpha,p}}^p
=\frac{2}{p}\|Y(\omega)\|_{\L^p}^p.
\end{align}

It remains to justify that $Y$ can be chosen as an $\L^p(\R^n)$-valued random variable. Let $(t_m)_{m\in\mathbb{N}}$ be a sequence such that $t_m\downarrow0$ as $m\uparrow\infty$. By assumption, for each $m\in\mathbb{N}$, $X\ast\Phi_{t_m}$ has a strongly measurable $\L^p(\R^n)$-valued version. On the full-measure set $\Omega'\in\mathcal{F}$, where \eqref{cor-random-path} holds, Theorem \ref{main-MS} gives $X(\omega)=Y(\omega)\in\L^p(\R^n)$, and Lemma \ref{diff-lem}(1) yields
$$X(\omega)\ast\Phi_{t_m}=Y(\omega)\ast\Phi_{t_m}\rightarrow Y(\omega)\quad\text{in }\L^p(\R^n),$$
as $m\to\infty$. Thus, we may define $Y$ as this limit on $\Omega'$ and set $Y=0$ elsewhere. Since $\L^p(\R^n)$ is separable for $p<\infty$, the pointwise limit of strongly measurable $\L^p(\R^n)$-valued random variables is strongly measurable. Hence $Y$ is an $\L^p(\R^n)$-valued random variable.

Finally, applying Fatou's lemma to \eqref{cor-random-path-limit} gives
\begin{align*}
\E\|Y\|_{\L^p}^p
=\frac{p}{2}\E\left[\lim_{j\to\infty}\alpha_j\|X\|_{\W^{-\alpha_j,p}}^p\right]
\leq\frac{p}{2}\liminf_{j\to\infty}\E\left[\alpha_j\|X\|_{\W^{-\alpha_j,p}}^p\right],
\end{align*}
which proves \eqref{cor-random-est}.
\end{proof}

\begin{remark}\label{remark-cor-random}
We observe that the Sobolev norms in \eqref{cor-random-cond} are measurable. Indeed, by the second assertion of Lemma \ref{diff-lem}(2), for each fixed $\omega\in\Omega$, the map $t\mapsto X(\omega)\ast\Phi_t$ is continuous from $(0,\infty)$ to $\L^p(\R^n)$. By the assumption, for each fixed $t>0$, $X\ast\Phi_t$ admits a strongly measurable $\L^p(\R^n)$-valued version. Since $\L^p(\R^n)$ is separable for $p\in(1,\infty)$, by \cite[Lemma 6.4.6]{Bogachev2007}, we derive that $(\omega,t)\longmapsto X(\omega)\ast\Phi_t$ has a jointly strongly measurable version. Thus, by the definition in \eqref{neg-sob-norm} and the Fubini--Tonelli theorem, $\omega\mapsto \|X(\omega)\|_{\W^{-\alpha_j,p}}^p$ is measurable for every $j\in\mathbb{N}$.
\end{remark}

\begin{example}[Occupation measures of fBm]\label{eg-fbm}
Let $T>0$ and $(\Omega,\mathcal{F},\mathbb{P})$ be a probability space. Let $B^H=(B_t^H)_{t\in[0,T]}$ be a fractional Brownian motion (fBm) on $\R^n$ with Hurst index $H\in(0,1)$, i.e., a centered continuous-time Gaussian process with covariance matrix
$${\rm Cov}(B_t^H,B_s^H)=\frac{1}{2}(s^{2H}+t^{2H}-|t-s|^{2H})\,{\rm I}_n,\quad s,t\geq0,$$
where ${\rm I}_n$ is the $n\times n$ identity matrix. For basics and applications of the fBm, refer to the monograph \cite{BHOZ2008}. For any Borel set $A\subset\R^n$, define the occupation measure of $B^H$ up to time $T$ by
$$\nu_T(A):=\int_0^T\mathbbm{1}_A(B_s^H)\,\d s.$$
Equivalently, for any bounded Borel function $g:\R^n\to\R$,
$$\int_{\R^n}g(x)\,\nu_T(\d x)=\int_0^T g(B_s^H)\,\d s.$$
It records the amount of time that the trajectory spends in each region of the state space $\R^n$. If normalized by $T^{-1}$, $\nu_T/T$ is also known as the empirical measure of
$B^H$. For recent studies of empirical measures of fBm and subordinated fBm under Wasserstein distances on the flat torus, see \cite{HMT2023,LiWu2026}.

A local time up to time $T$ is an occupation density of this measure, namely, a measurable random field $L_T$ such that
$$\nu_T=L_T\,\d x\quad\text{almost surely}.$$
We call it an $\L^2$-local time if, in addition, $L_T\in\L^2(\R^n)$ almost surely. Below, we obtain the stronger integrability
$$L_T\in\L^2(\Omega;\L^2(\R^n)),$$
where $\L^2(\Omega;\L^2(\R^n))$ denotes the Lebesgue--Bochner space consists of strongly measurable functions $F:\Omega\to\L^2(\R^n)$ such that $\E\|F\|_{\L^2(\R^n)}^2<\infty.$
The study of occupation densities and local times is classical; see, for instance,
\cite{Berman1973,GemanHorowitz1980}.

Assume that
$$ Hn<1.$$
Then $\nu_T$ has a square-integrable density. More precisely, there exists a random field
$L_T\in\L^2(\Omega;\L^2(\R^n))$ such that
\begin{align}\label{eg-fbm-1}
\nu_T(\d x)=L_T(x)\,\d x\quad\text{almost surely},
\end{align}
and
\begin{align}\label{eg-fbm-2}
\lim_{\alpha\to0^+}\alpha\|\nu_T\|_{\W^{-\alpha,2}}^2
=\|L_T\|_{\L^2(\R^n)}^2\quad\text{almost surely}.
\end{align}

Indeed, since $\nu_T$ is a finite random Radon measure, it is naturally an
$\mathcal{S}'(\R^n)$-valued random variable. For each $t>0$, by the Fubini--Tonelli theorem and the
semigroup property of the heat kernel $(\Phi_t)_{t>0}$,
\begin{align*}
\|\nu_T\ast\Phi_t\|_{\L^2}^2
&=\int_{\R^n}\int_0^T\int_0^T\Phi_t(x-B_v^H)\Phi_t(x-B_u^H)\,\d u\d v\d x  \\
&=\int_0^T\int_0^T\Phi_{2t}(B_u^H-B_v^H)\,\d u\d v.
\end{align*}
Moreover, for fixed $t>0$, the map
$$\omega\mapsto \nu_T(\omega)\ast\Phi_t=\int_0^T \Phi_t(\,\cdot-B_s^H(\omega))\,\d s$$
is strongly measurable as an $\L^2(\R^n)$-valued random variable.

Since $B_u^H-B_v^H$ is an $\R^n$-valued centered Gaussian vector with covariance matrix $|u-v|^{2H}{\rm I}_n$, we have
$$\E\Phi_{2t}(B_u^H-B_v^H)=\frac{1}{\big[2\pi(4t+|u-v|^{2H})\big]^{n/2}}.$$
Hence, for every $t>0$,
\begin{align*}
\E\|\nu_T\ast\Phi_t\|_{\L^2}^2
&=\frac{1}{(2\pi)^{n/2}}\int_0^T\int_0^T\frac{\d u\,\d v}{(4t+|u-v|^{2H})^{n/2}}  \\
&\leq\frac{1}{(2\pi)^{n/2}}\int_0^T\int_0^T|u-v|^{-Hn}\,\d u\,\d v=:C_{T,n,H}<\infty,
\end{align*}
where the finiteness follows from the assumption $Hn<1$. Consequently, for any $\alpha\in(0,1)$,
\begin{align*}
\E\left[\alpha\|\nu_T\|_{\W^{-\alpha,2}}^2\right]
&=\alpha\int_0^1 t^{\alpha-1}\E\|\nu_T\ast\Phi_t\|_{\L^2}^2\,\d t  \\
&\leq C_{T,n,H}\alpha\int_0^1 t^{\alpha-1}\,\d t=C_{T,n,H}.
\end{align*}
Taking any sequence $\alpha_j\downarrow0$, we obtain
$$\sup_{j\geq1}\E\left[\alpha_j\|\nu_T\|_{\W^{-\alpha_j,2}}^2\right]<\infty.$$
In particular, $\|\nu_T\|_{\W^{-\alpha_j,2}}<\infty$ almost surely for every $j$, and hence, $\nu_T\in\W^{-\alpha_j,2}(\R^n)$ almost surely for every $j$.

Thus, all assumptions of Corollary \ref{cor-random} are satisfied with $p=2$ and $X=\nu_T$. Therefore, there exists an $\L^2(\R^n)$-valued random variable $L_T$ such that
$$\nu_T=L_T\,\d x\quad\text{almost surely},$$
and
$$\E\|L_T\|_{\L^2(\R^n)}^2\leq C_{T,n,H}<\infty.$$
This proves $L_T\in\L^2(\Omega;\L^2(\R^n))$. Finally, the almost sure limiting identity \eqref{eg-fbm-2} follows directly from Theorem \ref{main-MS}.
\end{example}

\subsection{Applications of the abstract formula}
\begin{example}[Truncated Hardy--Littlewood maximal operators]\label{prop-maximal}
Let $p\in(1,\infty)$. For $f\in\L^p(\R^n)$ and $t>0$, define the truncated Hardy--Littlewood maximal operator as
$$M_tf(x):=\sup_{0<r\leq t}\frac{1}{|B(x,r)|}\int_{B(x,r)}|f(y)|\,\d y,\quad x\in\R^n,$$
where $B(x,r)$ denotes the open ball in $\R^n$ with center $x$ and radius $r$, and $|B(x,r)|$ denotes its $n$-dimensional Lebesgue measure. Then
$$\lim_{\alpha\to0^+}\alpha\int_0^1 t^{\frac{\alpha p}{2}-1}\|M_tf\|_{\L^p}^p\,\d t=\frac{2}{p}\|f\|_{\L^p}^p.$$

Indeed, choosing the supremum over rational radii gives an equivalent measurable representative of $M_tf$. The operators $M_t$ are sub-additive and satisfy $M_tf\leq Mf$, where $M$ is the usual Hardy--Littlewood maximal operator, defined by
$$M f(x):=\sup_{r>0}\frac{1}{|B(x,r)|}\int_{B(x,r)}|f(y)|\,\d y,\quad x\in\R^n.$$
 Hence, by the Hardy--Littlewood maximal inequality (see, e.g., \cite[Theorem 2.1.6]{Grafakos2014a}),
$$\|M_tf\|_{\L^p}\leq\|Mf\|_{\L^p}\leq C_{n,p}\|f\|_{\L^p},\quad 0<t\leq1,\,p\in(1,\infty),$$
for some constant $C_{n,p}$ depending only on $n$ and $p$. Moreover, the Lebesgue differentiation theorem (see, e.g., \cite[Corollary 2.1.16]{Grafakos2014a}) gives $M_tf(x)\to |f(x)|$ for almost every $x\in\R^n$ as $t\to0^+$. Since $M_tf\leq Mf$ and $Mf\in\L^p(\R^n)$, we have
$$|M_tf-|f||^p\leq 2^{p-1}\big[(Mf)^p+|f|^p\big],$$
and the right-hand side belongs to $\L^1(\R^n)$. The dominated convergence theorem yields
$$\|M_tf-|f|\|_{\L^p}\to0,\quad\mbox{as }t\to0^+.$$
In particular, $\|M_tf\|_{\L^p}\to\|f\|_{\L^p}$ as $t\to0^+$. Therefore, the limiting formula follows from Theorem \ref{main-gen-MS}.
\end{example}

\begin{example}[An Abel limit for conditional expectations]\label{eg-martingale}
Let $p\in[1,\infty)$, let $(\Omega,\mathcal{F},\mathbb{P})$ be a probability space, and let
$$\mathcal{F}_0\subset\mathcal{F}_1\subset\cdots\subset\mathcal{F}$$
be an increasing sequence of sub-$\sigma$-algebras such that the $\sigma$-algebra generate by $(\mathcal{F}_k)_{k\geq0}$, denoted by  $\mathcal{F}_\infty=\sigma(\cup_{k\geq0}\mathcal{F}_k)$, coincides with $\mathcal{F}$, up to $\mathbb{P}$-null sets. For $f\in \L^p(\Omega,\mathbb{P})$ and $k=0,1,2,\ldots$, set
$$E_kf:=\mathbb{E}[f\mid\mathcal{F}_k].$$
Then
\begin{align}\label{eg-martingale-1}
\lim_{\beta\to0^+}(1-e^{-\beta})\sum_{k=0}^\infty e^{-\beta k}\|E_kf\|_{\L^p(\mathbb{P})}^p
=\|f\|_{\L^p(\mathbb{P})}^p.
\end{align}

We are left to show \eqref{eg-martingale-1}.  Fix $f\in \L^p(\Omega,\mathbb{P})$.  For $0<t\leq1$, let $k(t)$ be the unique integer $k\geq0$ such that
$e^{-(k+1)}<t\leq e^{-k},$ and define
$$S_tf:=E_{k(t)}f,\quad 0<t\leq1.$$
When $t>1$, we set $S_tf:=E_0f$. Then $(S_t)_{t>0}$ is a family of operators on $\L^p(\Omega,\mathbb{P})$.

We now check the hypotheses of Theorem \ref{main-gen-MS}. Choose measurable versions of
$E_kf$. Then
$$S_tf(\omega)=\sum_{k=0}^\infty\mathbbm{1}_{(e^{-(k+1)},e^{-k}]}(t)\,E_kf(\omega),\quad 0<t\leq1,\,\omega\in\Omega,$$
and hence, $(t,\omega)\mapsto S_tf(\omega)$ is jointly measurable. Thus $(\mathfrak{p}.1)$ holds.   Moreover, Jensen's inequality gives
$$\|E_kf\|_{\L^p(\mathbb{P})}\leq \|f\|_{\L^p(\mathbb{P})},
\quad k=0,1,2,\cdots.$$
Hence, Minkowski's inequality implies
$$\|S_tf\|_{L^p(\mathbb{P})}\leq \|f\|_{\L^p(\mathbb{P})},\quad 0<t\leq1,$$
which shows that $(\mathfrak{p}.2)$ holds with constant $C=1$.

By the convergence theorem for conditional expectations with respect to increasing $\sigma$-algebras (see e.g. \cite[Theorem 10.2.1]{Bogachev2007}),
$$E_kf\rightarrow \mathbb{E}[f\mid\mathcal{F}_\infty]\quad\text{in }\L^p(\Omega,\mathbb{P}),$$
as $k\to\infty$. (This can also be obtained by using the martingale convergence theorem, since it is a standard result that $(E_kf, \mathcal{F}_k)_{k\geq0})$ is a martingale; see e.g. \cite[Proposition 3.11 and Corollary 3.18]{URR2025}.) Since $\mathcal{F}_\infty=\mathcal{F}$ up to null sets and $f$ is $\mathcal{F}$-measurable,
we have as $k\to\infty$,
$$E_kf\to f\quad\text{in }\L^p(\Omega,\mathbb{P}).$$
As $t\to0^+$, one has $k(t)\to\infty$, and hence
$$S_tf=E_{k(t)}f\to f\quad\text{in }\L^p(\Omega,\mathbb{P}).$$
In particular, $(\mathfrak{p}.3)$ holds.

We apply Theorem \ref{main-gen-MS}. For $\alpha>0$, set $\beta:=\frac{\alpha p}{2}.$ Then
\begin{align*}
\alpha\|f\|_{\mathcal{W}^{-\alpha,p}(\mathbb{P})}^p
&=\alpha\int_0^1 t^{\frac{\alpha p}{2}-1}\|S_tf\|_{\L^p(\mathbb{P})}^p\,\d t  \\
&=\frac{2\beta}{p}\sum_{k=0}^\infty\|E_kf\|_{\L^p(\mathbb{P})}^p\int_{e^{-(k+1)}}^{e^{-k}} t^{\beta-1}\,\d t  \\
&=\frac{2}{p}(1-e^{-\beta})\sum_{k=0}^\infty \|E_kf\|_{\L^p(\mathbb{P})}^p  e^{-\beta k},
\end{align*}
where the Abel series on right-hand side is finite for every $\beta>0$ in the sense that it is absolutely convergent, since $0\leq \|E_kf\|_{\L^p(\mathbb{P})}^p\leq \|f\|_{\L^p(\mathbb{P})}^p$ for all $k=0,1,\cdots$, and $\sum_{k=0}^{\infty}e^{-\beta k}<\infty$. Thus, Theorem \ref{main-gen-MS} leads to \eqref{eg-martingale-1}.
\end{example}

\subsection*{Acknowledgment}\hskip\parindent
The author thanks Professor Feng-Yu Wang for posing Question (Q) during the author's talk at a workshop in 2025. The main results of this paper were presented at the 15th AIMS Conference on Dynamical Systems, Differential Equations and Applications (Athens, Greece, July 6--10, 2026).  The author also gratefully acknowledges financial support from the National Key R\&D Program of China (Grant No.~2022YFA1006000) and the National Natural Science Foundation of China (Grant No.~12671176).


\begin{thebibliography}{a23}

\bibitem{Berman1973}
S.M. Berman: Local nondeterminism and local times of Gaussian processes. Indiana Univ. Math. J. 23 (1973), no. 1, 69--94.

\bibitem{ACPS20a}
A. Alberico, A. Cianchi, L. Pick, L. Slav\'{\i}kov\'{a}:  On the Limit as $s\to0^+$ of Fractional Orlicz--Sobolev Spaces. J. Fourier Anal. Appl. 26 (2020), Paper No. 80.

\bibitem{ACPS20b}
A. Alberico, A. Cianchi, L. Pick, L. Slav\'{\i}kov\'{a}: On the limit as $s\to1^-$ of possibly non-separable fractional Orlicz--Sobolev spaces. Atti Accad. Naz. Lincei Cl. Sci. Fis. Mat. Natur. 31 (2020), no. 4, 879--899.

\bibitem{AKM2019}
S. Armstrong, T. Kuusi, J.-C. Mourrat: \emph{Quantitative stochastic homogenization and large-scale regularity}. Grundlehren der mathematischen Wissenschaften [Fundamental Principles of Mathematical Sciences], 352. Springer, Cham, 2019.

\bibitem{BGL2014}
D. Bakry, I. Gentil, M. Ledoux: \emph{Analysis and geometry of Markov diffusion operators}. Grundlehren der mathematischen Wissenschaften [Fundamental Principles of Mathematical Sciences], 348. Springer, Cham, 2014.

\bibitem{BHOZ2008}
F. Biagini, Y. Hu, B. {\O}ksendal, T. Zhang: \emph{Stochastic Calculus for Fractional Brownian Motion and Applications}. Probability and its Applications (New York). Springer-Verlag London, Ltd., London, 2008.

\bibitem{Bogachev2007}
V.I. Bogachev: \emph{Measure Theory}, Vol. II. Springer, Berlin, 2007.

\bibitem{BBM1}
J. Bourgain, H. Brezis, P. Mironescu: Another look at Sobolev spaces. In \emph{Optimal control and partial differential equations}, IOS, Amsterdam 2001, 439--455.

\bibitem{BBM2}
J. Bourgain, H. Brezis, P. Mironescu: Limiting embedding theorems for $W^{s,p}$ when $s\nearrow1$ and applications. Dedicated to the memory of Thomas H. Wolff. J. Anal. Math.
87 (2002), 77--101.

\bibitem{BSY23}
D. Brazke, A. Schikorra, P.-L. Yung: Bourgain--Brezis--Mironescu convergence via Triebel-Lizorkin spaces. Calc. Var. Partial Differential Equations 62 (2023), Paper No. 41.

\bibitem{Brezis2002}
H. Brezis: How to recognize constant functions. Connections with Sobolev spaces. Volume in honor of M. Vishik, Uspekhi Mat. Nauk 57 (2002), 59--74; English translation in Russian Math. Surveys 57 (2002), 693--708.

\bibitem{Brezis2011}
H. Brezis: \emph{Functional Analysis, Sobolev Spaces and Partial Differential Equations}. Universitext, Springer, 2011.


\bibitem{BGT2022}
F. Buseghin, N. Garofalo, G. Tralli: On the limiting behaviour of some nonlocal seminorms: a new phenomenon. Ann. Sc. Norm. Super. Pisa Cl. Sci. (5) Vol. XXIII (2022), 837--875.


\bibitem{DGPYYZ24}
F. Dai, L. Grafakos, Z. Pan, D. Yang, W. Yuan, Y. Zhang: The Bourgain--Brezis--Mironescu formula on ball Banach function spaces. Math. Ann. 388 (2024), no. 2, 1691--1768.

\bibitem{Davila02}
J. D\'{a}vila: On an open question about functions of bounded variation.  Calc. Var. Partial Differential Equations 15 (2002), 519--527.


\bibitem{DNPV2012}
E. Di Nezza, G. Palatucci, E. Valdinoci: Hitchhiker's guide to the fractional Sobolev spaces. Bull. Sci. Math. 136 (2012), no. 5, 521--573.

\bibitem{DLTYY2024}
O. Dom\'{\i}nguez, Y. Li, S. Tikhonov, D. Yang, W. Yuan: A unified approach to self-improving property via $K$-functionals.  Calc. Var. Partial Differential Equations 63 (2024), no. 9, Paper No. 231.


\bibitem{Feller2}
W. Feller: \emph{An introduction to probability theory and its applications. II.} Second edition. John Wiley \& Sons, Inc., New York, London, Sydney, 1971.

\bibitem{FL2023}
F. Flandoli, E. Luongo: \emph{Stochastic partial differential equations in fluid mechanics}. Lecture Notes in Mathematics, 2330. Springer, Singapore, 2023.

\bibitem{FOT2011}
M. Fukushima, Y. Oshima, M. Takeda: \emph{Dirichlet forms and symmetric Markov processes}. De Gruyter Studies in Mathematics, 19. Walter de Gruyter \& Co., Berlin, extended edition, 2011.

\bibitem{GemanHorowitz1980}
D. Geman, J. Horowitz: Occupation densities. Ann. Probab. 8 (1980), no. 1, 1--67.

\bibitem{GT2024}
N. Garofalo, G. Tralli: A universal heat semigroup characterisation of Sobolev and BV spaces in Carnot groups. Int. Math. Res. Not. IMRN no. 8, (2024), 6731--6758.

\bibitem{Grafakos2014a}
L. Grafakos: \emph{Classical Fourier analysis}. Third edition. Graduate Texts in Mathematics, 249. Springer, New York, 2014.

\bibitem{Grafakos2014b}
L. Grafakos:  \emph{Modern Fourier Analysis}.  Third edition. Graduate Texts in Mathematics, 250. Springer, New York, 2014.

\bibitem{Han24}
B.-X. Han: On the asymptotic behaviour of the fractional Sobolev seminorms: A geometric approach. J. Funct. Analysis
287 (2024), no. 9, Paper No. 110608.

\bibitem{HPXZ25}
B.-X. Han, A. Pinamonti, Z. Xu, K. Zambanini: Maz'ya--Shaposhnikova meet Bishop--Gromov. Potential Anal. 63 (2025), 513--529.

\bibitem{HLYY25}
P. Hu, Y. Li, D. Yang, W. Yuan: A sharp localized weighted inequality related to Gagliardo and Sobolev seminorms and its applications.
Adv. Math. 481 (2025), Paper No. 110537.

\bibitem{HMT2023}
M. Huesmann, F. Mattesini, D. Trevisan: Wasserstein asymptotics for the empirical measures of fractional Brownian motion on a flat torus.  Stochastic Process. Appl. 155 (2023), 1--26.


\bibitem{HvNVW2016}
T. Hyt\"{o}nen, J. van Neerven, M. Veraar, L. Weis:\emph{ Analysis in Banach spaces. Volume I: Martingales and Littlewood-Paley theory.}  A Series of Modern Surveys in Mathematics,
63. Springer, Cham, 2016.

\bibitem{Leoni2023}
G. Leoni:  \emph{A first course in fractional Sobolev spaces}. Graduate Studies in Mathematics, 229. American Mathematical Society, Providence, RI, 2023.


\bibitem{LiWu2025+}
H. Li, B. Wu: On Limit Formulas for Besov Seminorms and Nonlocal Perimeters in the Dunkl Setting. Preprint (2025), arXiv:2503.20809v2.

\bibitem{LiWu2026}
H. Li, B. Wu: Wasserstein convergence for empirical measures of subordinated fractional Brownian motions on the flat torus. Bull. Sci. math. 213 (2026), Paper No. 103891, 44 pp.

\bibitem{Ludwig2014}
M. Ludwig: Anisotropic fractional Sobolev norms. Adv. Math. 252 (2014), 150--157.

\bibitem{MR1992}
Z.-M. Ma, M. R\"{o}ckner: \emph{Introduction to the Theory of (Non-Symmetric) Dirichlet Forms}. Springer, 1992.

\bibitem{MS2002}
V. Maz'ya, T. Shaposhnikova: On the Bourgain, Brezis, and Mironescu theorem concerning limiting embeddings of fractional Sobolev spaces. J. Funct. Anal. 195 (2002), 230--238.

\bibitem{Milman05}
M. Milman: Notes on limits of Sobolev spaces and the continuity of interpolation scales. Trans. Amer. Math. Soc. 357 (2005), 3425--3442.

\bibitem{Mitrea2018}
D. Mitrea: \emph{Distributions, partial differential equations, and harmonic analysis}. Second edition. Universitext, Springer, New York, 2018.


\bibitem{Mohanta24}
K. Mohanta: Bourgain-Brezis-Mironescu formula for $W_q^{s,p}$-spaces in arbitrary domains.  Calc. Var. Partial Differential Equations 63 (2024), no. 2, Paper No. 31.

\bibitem{NTYYZ25+}
E. Nakai, M. Tang, D. Yang, W. Yuan, C. Zhu: Maz'ya--Shaposhnikova Representation of Quasi-Norms of Ball Quasi-Banach Function Spaces on Spaces of Homogeneous Type with Weak Reverse Doubling Property. Preprint (2025), arXiv:2511.22960.

\bibitem{Oleinik2025}
R. Oleinik: Asymptotic relations of the Bourgain-Brezis-Mironescu type for mappings between singular spaces. J. Geom. Anal. 35 (2025), no. 196.

\bibitem{Temam2001}
R. Temam: \emph{Navier-Stokes equations: theory and numerical analysis}. AMS Chelsea Publishing, 343. Providence, RI, 2001 (Reprint of the 1984 edition).

\bibitem{Tenenbaum15}
 G. Tenenbaum: \emph{Introduction to analytic and probabilistic number theory}. Third edition. Graduate Studies in Mathematics, 163. American Mathematical Society, Providence, RI, 2015.

\bibitem{Triebel92}
H. Triebel: \emph{Theory of function spaces. II.}  Monographs in Mathematics, 84. Birkh\"{a}user-Verlag, Basel, 1992.


\bibitem{URR2025}
W. Urbina-Romero, R. Rios: \emph{An Introduction to the Modern Martingale Theory and Applications: An Analytic View.} Texts in Applied Mathematics, 81. Springer International Publishing, 2025.

\bibitem{Yoshida}
K. Yoshida: \emph{Functional Analysis}. Sixth Edition. Springer-Verlag, Berlin, 1980.
\end{thebibliography}
\end{document}